\documentclass[11pt]{article}

\usepackage[margin=1.15in]{geometry}
\usepackage{amsmath,amssymb,amsthm,mathtools}
\usepackage{enumitem}
\usepackage{hyperref}

\newtheorem{theorem}{Theorem}
\newtheorem{lemma}[theorem]{Lemma}
\newtheorem{proposition}[theorem]{Proposition}

\newcommand{\R}{\mathbb R}
\newcommand{\Z}{\mathbb Z}
\newcommand{\E}{\mathbb E}
\newcommand{\Prob}{\mathbb P}
\newcommand{\cH}{\mathcal H}
\newcommand{\cL}{\mathcal L}
\newcommand{\eps}{\varepsilon}
\newcommand{\diam}{\operatorname{diam}}
\newcommand{\dist}{\operatorname{dist}}
\newcommand{\Var}{\operatorname{Var}}

\title{Randomly Shifted Steinhaus Longimeters and Buffon Discrepancy}
\author{Samuel Korsky}
\date{May 10, 2026}

\begin{document}

\maketitle

\begin{abstract}
\noindent
Let $\Omega\subset\R^2$ be a bounded convex domain. Steinerberger (2026) introduced the Buffon discrepancy problem: given length $L$, construct a one-dimensional set $S\subset\Omega$ such that the number of intersections of $S$ with a line $\ell$ approximates the Crofton-normalized chord length
\[
        \frac{2L}{\pi|\Omega|}\cdot\cH^1(\ell\cap\Omega).
\]
Steinerberger proved a universal upper bound of order $L^{1/3}$ using a Steinhaus longimeter construction, and showed that the disk admits bounded discrepancy. We prove that a randomly shifted Steinhaus construction improves the order of the universal upper bound to $L^{1/5}(\log L)^{2/5}$. 
\end{abstract}

\section{Introduction}

Let $\Omega\subset\R^2$ be a compact convex body with nonempty interior. Given a rectifiable one-dimensional set $S\subset\Omega$, one expects, by Crofton-type considerations, that the number of intersections of a typical line $\ell$ with $S$ should be proportional to the length of the chord $\ell\cap\Omega$. Steinerberger \cite{SteinerbergerBuffon} formalized this as the Buffon discrepancy problem.

\bigskip
\noindent
We write $|\Omega|$ for the area of $\Omega$ and $\cH^1$ for one-dimensional Hausdorff measure. For a finite union of line segments $S\subset\Omega$ with length $\cH^1(S) = L$, let $\#(\ell \cap S)$ denote the number of transverse intersections of $\ell$ with $S$, counted with multiplicity. The exceptional set of lines that contain a segment of $S$, pass through an endpoint, or pass through an intersection point of two segments has measure zero in the line space. Thus we define
\[
        \operatorname{disc}_\Omega(S)
        =
        \operatorname*{ess\,sup}_{\ell\in\cL}
        \left|
        \#(\ell \cap S)
        -
        \frac{2L}{\pi|\Omega|}
        \cdot \cH^1(\ell\cap\Omega)
        \right|.
\]
Here $\cL$ denotes the space of unoriented lines in $\R^2$, equipped with the usual motion-invariant measure.

\bigskip
\noindent
Steinerberger proved that for every convex $\Omega$ there are sets $S$ of length $L$ satisfying
\[
        \operatorname{disc}_\Omega(S)\lesssim_\Omega L^{1/3}.
\]
His construction generalizes the Steinhaus longimeter: choose $n$ evenly spaced directions, draw parallel lines with spacing $\eps$ in each direction, and restrict the resulting grid to $\Omega$. If $n/\eps\sim L$, the discrepancy is (up to a constant) bounded by
\[
        \frac{L}{n^2}+n.
\]
The first term is an angular quadrature error. The second term is a crude endpoint-rounding error, obtained by summing an $O(1)$ error over $n$ directions. Optimizing gives $n\sim L^{1/3}$ and discrepancy $O_\Omega(L^{1/3})$.

\bigskip
\noindent
The purpose of this note is to show that the endpoint errors do not need to add coherently. We randomly shift each of the $n$ parallel-line families so that for any fixed chord, the endpoint errors are independent, centered, and bounded. A simple deterministic net argument then gives a uniform estimate of order
\[
        \sqrt{n\log L}+\log L.
\]
This improves the error model to
\[
        \frac{L}{n^2}+\sqrt{n\log L}+\log L.
\]
Choosing
\[
        n\sim L^{2/5}(\log L)^{-1/5}
\]
then gives the following theorem:

\begin{theorem}\label{thm:main}
Let $\Omega\subset\R^2$ be a compact convex body with nonempty interior. Then there exists $C_\Omega<\infty$ such that for every sufficiently large $L$ there is a finite union of line segments $S\subset\Omega$ satisfying
\[
        \cH^1(S)=L
\]
and
\[
        \operatorname{disc}_\Omega(S)
        \le
        C_\Omega L^{1/5}(\log L)^{2/5}.
\]
\end{theorem}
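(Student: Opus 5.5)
We construct $S$ as a randomly shifted Steinhaus longimeter and bound its discrepancy by the three-term error model from the introduction. Fix $n$ and spacing $\eps$. Take directions $\theta_j=\pi j/n$, $j=0,\dots,n-1$, with unit normals $\nu_j$. Draw independent shifts $U_j$ uniform in $[0,\eps)$, and let $F_j$ be the union of the lines $\{x:\langle x,\nu_j\rangle\in U_j+\eps\Z\}$. Set $S=\Omega\cap\bigcup_j F_j$. For a line $\ell$ with direction $\phi$, the chord $\ell\cap\Omega$ is a segment of length $c$. The number of points of $F_j$ on this chord is $N_j=c|\sin(\phi-\theta_j)|/\eps+\xi_j$. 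Since the projection of the chord onto $\nu_j$ is an interval of length $c|\sin(\phi-\theta_j)|$ and the shift is uniform, the error $\xi_j$ is centered, satisfies $|\xi_j|\le1$, and the $\xi_j$ are independent in $j$.

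\emph{Step 1 (angular quadrature).} We show that
\[
\frac1\eps\sum_j|\sin(\phi-\theta_j)|=\frac{2n}{\pi\eps}+O\!\left(\frac{1}{n\eps}\right)
\]
uniformly in $\phi$. This is the standard estimate for a Riemann sum of a Lipschitz periodic function with one kink per period. The length also needs care. $\cH^1(S)=\frac1\eps\sum_j\int|\Omega\cap\{\text{lines}\}|$, and by Crofton the expectation is exactly $n|\Omega|/\eps$. The realized length deviates from this by $O(n\diam\Omega)$. We fix the realization first and then take $\eps$ so that $\cH^1(S)=L$ exactly. Since $L$ is a continuous monotone function of $\eps$ up to jumps of size $O(\diam \Omega)$ for fixed shifts, we remove the slack by deleting or adding a short segment, at a cost of $O(1)$ in discrepancy. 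Then $2L/(\pi|\Omega|)=2n/(\pi\eps)+O(1)$, so the deterministic error is $cL/(n^2|\Omega|)\lesssim_\Omega L/n^2$, using $n/\eps\sim L/|\Omega|$.

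\emph{Step 2 (uniform random error).} Hoeffding gives $\Prob(|\sum_j\xi_j(\ell)|>t)\le2e^{-t^2/(2n)}$ for each fixed $\ell$. To make this uniform we use a deterministic net. Parametrize the lines meeting $\Omega$ by $(\phi,p)$ and take a grid of mesh $\delta=L^{-A}$, which has $L^{O(1)}$ points. A union bound with $t=C\sqrt{n\log L}$ makes the estimate hold on all net lines with positive probability. We then transfer it to an arbitrary $\ell$, and this is the main obstacle, since $\#(\ell\cap S)$ is discontinuous. For $\ell$ near a net line $\ell'$, the counts differ by the number of segments of $S$ crossing the thin region between $\ell$ and $\ell'$, plus endpoint effects near $\partial\Omega$.

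I would handle this by sandwiching. For each family $j$, the number of its lines meeting a strip or wedge of width $w$ in direction $\nu_j$ is at most $w/\eps+1$. With $w\lesssim\delta\diam\Omega$, summing over $j$ gives $O(n\delta/\eps+n)$. The $+n$ term is too large, so instead we compare counts per family. For each $j$, $N_j(\ell)$ is an integer count of lattice points $U_j+\eps\Z$ in the projected interval $I_j(\ell)$. The endpoints of $I_j$ move by $O(\delta)$, so $N_j(\ell)$ is trapped between the counts for two slightly enlarged and shrunk intervals. We add these lattice-point counts on bracketing intervals to the net, with endpoints in a $\delta$-grid and indexed by $j$ and net line. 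Hoeffding still applies to them, since each is a sum of independent bounded centered terms after subtracting the mean length$/\eps$. The difference in means is $O(n\delta/\eps)=o(1)$. An extra error arises only for those $j$ for which a lattice point falls within $\delta$ of an endpoint of $I_j$. The count of such $j$ is itself a sum of independent Bernoullis with mean $O(n\delta/\eps)$, which is at most $O(\log L)$ uniformly over the net by a Chernoff bound. This gives the additive $\log L$ term. Hence with positive probability $\sup_\ell|\sum_j\xi_j(\ell)|\lesssim\sqrt{n\log L}+\log L$.

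\emph{Step 3 (optimize).} Combining the steps,
\[
\operatorname{disc}_\Omega(S)\lesssim_\Omega \frac{L}{n^2}+\sqrt{n\log L}+\log L.
\]
Taking $n\sim L^{2/5}(\log L)^{-1/5}$ gives $L/n^2\sim L^{1/5}(\log L)^{2/5}$ and $\sqrt{n\log L}\sim L^{1/5}(\log L)^{2/5}$. Some realization of the shifts attains this bound, which proves Theorem~\ref{thm:main}.
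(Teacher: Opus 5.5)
Your architecture (random shifts, angular quadrature, Hoeffding on a net, a Chernoff count of unstable directions, $n\sim L^{2/5}(\log L)^{-1/5}$) is the paper's. However, the length normalization in Step 1 has a genuine gap. Tuning $\eps$ so that $\cH^1(S)=L$ does not make $L$ close to $n|\Omega|/\eps$. Put $\Lambda=\cH^1(S)-n|\Omega|/\eps$. Up to your $O(\diam\Omega)$ correction segment, $\Lambda=\sum_j\bigl(\cH^1(\Omega\cap F_j)-|\Omega|/\eps\bigr)$, and the discrepancy at a line contains the term $-\frac{2c}{\pi|\Omega|}\Lambda$, with $c$ the chord length. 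So your claim $2L/(\pi|\Omega|)=2n/(\pi\eps)+O(1)$ is really the claim $\Lambda=O(1)$. You have only shown $|\Lambda|\lesssim n\diam\Omega$, and a single directional term can genuinely be of size comparable to $\diam\Omega$ (for instance, a family parallel to an edge of a polygon). With that bound your error model becomes $L/n^2+n$, which is Steinerberger's $L^{1/3}$.

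The missing ingredient is length concentration. The $n$ directional lengths are independent, and each lies within $2\diam\Omega$ of its mean $|\Omega|/\eps$, because the slice function is concave and hence has variation at most $2\diam\Omega$. Hoeffding then gives $|\Lambda|\lesssim\sqrt{n\log L}$ with high probability, which is of admissible order. But this holds for a \emph{fixed} $\eps$, as does the good event in Step 2. Choosing $\eps$ after fixing the realization, as you do, invalidates both unless you prove them uniformly over a range of $\eps$. The paper reverses the order. It fixes $\eps$ deterministically so that the expected length is $M=L-K_0\Phi(L)$, where $\Phi(L)=L^{1/5}(\log L)^{2/5}$. Concentration then gives $L-O(\Phi(L))\le\cH^1(S_{n,\eps,U})\le L$. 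Finally it pads with $O(1+\Phi(L))$ short disjoint segments inside a disk in $\Omega$; a line meets at most that many, and the Crofton target changes by $O(\Phi(L))$.

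Step 2 has a second, more technical flaw: a grid of lines in $(\phi,p)$ does not control chord endpoints. Suppose $\Omega$ has a flat edge and $\ell$ crosses it at an angle $\alpha\ll\delta$. Perturbing the line parameters by $\delta$ moves the endpoint on that edge by about $\delta/\alpha$, or off the edge altogether. So the claim ``the endpoints of $I_j$ move by $O(\delta)$'' fails for the nearest net line, and your bracketing of $I_j(\ell)$ breaks down on a set of lines of positive measure. The paper sidesteps boundary geometry by netting pairs of \emph{points} of $\Omega$ instead of lines, namely all pairs from a $\delta$-net $\mathcal P$. The endpoint error is defined for an arbitrary pair of points, and every chord $[x,y]$ has a net pair within $\delta$ of $x$ and $y$ respectively. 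With that change, your bracketing (or the unstable-direction count) goes through.
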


\section{The Randomly Shifted Construction}

Let
\[
        A=|\Omega|,
        \quad
        D=\diam(\Omega).
\]
For an integer $n\ge1$, define
\[
        \nu_k
        =
        \left(
        \cos\frac{\pi k}{n},
        \sin\frac{\pi k}{n}
        \right),
        \qquad
        0\le k<n.
\]
Let $U_0,\dots,U_{n-1}$ be independent uniform random variables on $[0,1)$. For a spacing $\eps>0$, define the shifted line family
\[
        \mathcal L_k
        =
        \left\{
        x\in\R^2:
        x\cdot\nu_k=\eps(q+U_k)
        \text{ for some }q\in\Z
        \right\}.
\]
The randomly shifted Steinhaus set is
\[
        S_{n,\eps,U}
        =
        \Omega\cap\bigcup_{k=0}^{n-1}\mathcal L_k.
\]
Since overlaps of different line families are finite sets, length is additive:
\[
        \cH^1(S_{n,\eps,U})
        =
        \sum_{k=0}^{n-1}
        \cH^1(\Omega\cap\mathcal L_k).
\]
Let $\ell$ be a line such that $\ell\cap\Omega$ is a nondegenerate segment. Write
\[
        \ell\cap\Omega=[x,y],
        \quad
        h_\ell=|x-y|.
\]
Let
\[
        t=\frac{y-x}{|y-x|}
\]
be the chord direction. For $0\le k<n$, define
\[
        a_k(x,y)=\min(x\cdot\nu_k,y\cdot\nu_k),
        \quad
        b_k(x,y)=\max(x\cdot\nu_k,y\cdot\nu_k).
\]
The number of transverse intersections of $\ell$ with the $k$-th shifted family equals
\[
        N_k(x,y)
        =
        \#\left\{
        q\in\Z:
        \eps(q+U_k)\in [a_k(x,y),b_k(x,y))
        \right\},
\]
where the half-open interval convention removes endpoint ambiguity. The projected length of the chord in direction $\nu_k$ is
\[
        b_k(x,y)-a_k(x,y)
        =
        |(y-x)\cdot\nu_k|
        =
        h_\ell |t\cdot\nu_k|.
\]
Thus
\[
        N_k(x,y)
        =
        \frac{h_\ell |t\cdot\nu_k|}{\eps}
        +
        \xi_k(x,y),
\]
where
\[
        \xi_k(x,y)
        =
        N_k(x,y)
        -
        \frac{b_k(x,y)-a_k(x,y)}{\eps}.
\]
We will estimate separately the deterministic main term
\[
        \frac{h_\ell}{\eps}\sum_{k=0}^{n-1}|t\cdot\nu_k|
\]
and the random endpoint error
\[
        Z(x,y)=\sum_{k=0}^{n-1}\xi_k(x,y).
\]

\section{The Angular Quadrature Estimate}
Here we proceed exactly as in \cite{SteinerbergerBuffon}, with the following lemma:

\begin{lemma}[Angular quadrature]\label{lem:angular}
Uniformly for every unit vector $t\in S^1$,
\[
        \sum_{k=0}^{n-1}|t\cdot\nu_k|
        =
        \frac{2n}{\pi}
        +
        O\left(\frac1n\right).
\]
\end{lemma}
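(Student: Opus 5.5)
Write $t=(\cos\phi,\sin\phi)$, so that $|t\cdot\nu_k|=|\cos(\phi-\pi k/n)|$. The sum in Lemma~\ref{lem:angular} is then a Riemann sum, at the $n$ equally spaced nodes $\pi k/n$, of the $\pi$-periodic function $g(\theta)=|\cos(\phi-\theta)|$ over one full period $[0,\pi)$. Its mean value is
\[
\frac1\pi\int_0^\pi|\cos\theta|\,d\theta=\frac2\pi,
\]
so the main term is $n\cdot\frac{2}{\pi}=\frac{2n}{\pi}$. What remains is to show that the error is $O(1/n)$ uniformly in $\phi$, rather than the naive $O(1)$.

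The cleanest route is to use Fourier series. The function $|\cos\theta|$ has the expansion
\[
|\cos\theta|=\frac2\pi+\frac4\pi\sum_{m\ge1}\frac{(-1)^{m+1}}{4m^2-1}\cos(2m\theta),
\]
which converges absolutely. Substituting $\theta=\phi-\pi k/n$ and summing over $k$, the inner sum $\sum_{k=0}^{n-1}\cos(2m\phi-2\pi mk/n)$ vanishes unless $n\mid m$. When $n\mid m$ it equals $n\cos(2m\phi)$, which has absolute value at most $n$. Hence the error is bounded by
\[
\frac{4}{\pi}\sum_{j\ge1}\frac{n}{4j^2n^2-1}\lesssim \frac1n\sum_j j^{-2}=O(1/n),
\]
uniformly in $\phi$. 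Interchanging the finite sum with the series is justified by absolute convergence.

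An alternative, more elementary route avoids Fourier series. One uses the trapezoidal/midpoint rule for periodic functions. The function $g$ is piecewise smooth with $|g''|\le1$ and has a single kink per period, at $\theta=\phi+\pi/2$. On each of the $n-1$ node intervals not containing the kink, the periodic composite-rule error is $O(n^{-3})$, which totals $O(n^{-2})$. On the interval containing the kink, $g$ is Lipschitz and has a corner, giving an error of $O(h^2)=O(n^{-2})$ in the integral. After multiplying by $n/\pi$ to convert from the integral normalization to the sum, the total error is $O(1/n)$.

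The only real obstacle is this kink. Without it, periodicity would give super-polynomial accuracy. With it, the fact that $g$ is Lipschitz ensures that the error on the single bad interval is quadratic in the mesh width, not linear. The Fourier approach handles this automatically through the $m^{-2}$ decay of the coefficients, so I would present that version.
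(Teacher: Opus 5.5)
Your Fourier-series argument is correct and is essentially the paper's proof. Both use the same expansion of $|\cos u|$ with $O(m^{-2})$ coefficients, the same observation that summing over the $n$ equally spaced nodes kills every frequency with $n\nmid m$, and the same final bound $n\sum_{j\ge1}O((jn)^{-2})=O(1/n)$, uniform in the angle. Your alternative trapezoidal-rule sketch is also sound, with $O(n^{-3})$ local error on the smooth cells and $O(n^{-2})$ on the single cell containing the Lipschitz kink, but the paper uses only the Fourier route.
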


\begin{proof}
Write
\[
        t=(\cos\theta,\sin\theta).
\]
Then
\[
        |t\cdot\nu_k|
        =
        \left|
        \cos\left(\theta-\frac{\pi k}{n}\right)
        \right|.
\]
The Fourier series for $|\cos u|$ is
\[
        |\cos u|
        =
        \frac2\pi
        +
        \frac4\pi
        \sum_{r=1}^\infty
        \frac{(-1)^r}{1-4r^2}\cos(2ru).
\]
The coefficients are $O(r^{-2})$. Summing over $k$ gives
\[
        \sum_{k=0}^{n-1}
        \cos\left(2r\left(\theta-\frac{\pi k}{n}\right)\right)
        =
        \begin{cases}
        n\cos(2r\theta), & n\mid r,\\
        0, & n\nmid r.
        \end{cases}
\]
Therefore
\[
\begin{aligned}
        \sum_{k=0}^{n-1}|t\cdot\nu_k|
        &=
        \frac{2n}{\pi}
        +
        n\sum_{j=1}^{\infty}O\left(\frac1{(jn)^2}\right)  \\
        &=
        \frac{2n}{\pi}+O\left(\frac1n\right),
\end{aligned}
\]
uniformly in $\theta$.
\end{proof}

\bigskip
\noindent
Consequently,
\[
        \frac{h_\ell}{\eps}
        \sum_{k=0}^{n-1}|t\cdot\nu_k|
        =
        \frac{2n}{\pi\eps}\cdot h_\ell
        +
        O_\Omega\left(\frac1{\eps n}\right).
\]

\section{The Endpoint Error}

The main probabilistic estimate, and the main contribution of this note, is the following:

\begin{lemma}[Uniform endpoint error]\label{lem:endpoint}
Let $\Omega\subset\R^2$ be compact and contained in a ball of radius $D$. Assume $0<\eps\le1$ and $n\ge1$. There exist constants
\[
        C_R=C_R(\Omega)\ge e,
        \quad
        C_E=C_E(\Omega)<\infty
\]
such that, if
\[
        R=\frac{C_R n}{\eps},
\]
then, with probability at least $1-2R^{-10}$,
\[
        \sup_{x,y\in\Omega}
        \left|
        \sum_{k=0}^{n-1}
        \left(
        N_k(x,y)
        -
        \frac{b_k(x,y)-a_k(x,y)}{\eps}
        \right)
        \right|
        \le
        C_E
        \left(
        \sqrt{n\log R}+\log R
        \right).
\]
\end{lemma}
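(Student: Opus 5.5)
Fix $x,y$. For each $k$, $N_k(x,y)$ counts the points of the random lattice $\eps(\Z+U_k)$ lying in an interval of length $b_k-a_k$. Hence $\E N_k=(b_k-a_k)/\eps$. Since $U_k$ is uniform on $[0,1)$, $N_k$ equals $\lfloor (b_k-a_k)/\eps\rfloor$ or that plus one, so $|\xi_k|\le1$ and $\E\xi_k=0$. The $\xi_k$ are independent in $k$ because the $U_k$ are. Hoeffding's inequality gives, for each fixed pair,
\[
\Prob\bigl(|Z(x,y)|>s\bigr)\le 2\exp\!\left(-\frac{s^2}{2n}\right).
\]
With $s=C\sqrt{n\log R}$ this is $\le 2R^{-c}$ for any desired $c$. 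The remaining task is to make the bound uniform over the uncountable family of pairs $(x,y)\in\Omega^2$.

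For uniformity I would use a deterministic discretization based on monotonicity rather than a Lipschitz bound, since $N_k$ jumps. Take a grid $G$ of mesh $\delta=\eps/(C' n)$ covering a neighbourhood of $\Omega$, so that $|G|\lesssim_\Omega (n/\eps)^2\le R^2$. For $x,y\in\Omega$ choose grid points with $|x-x'|,|y-y'|\le\delta$. Then for every $k$, the endpoints $a_k,b_k$ move by at most $\delta$. We need to handle two perturbations.

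First, the deterministic term $(b_k-a_k)/\eps$ changes by at most $2\delta/\eps$ per $k$, so by at most $2n\delta/\eps=O(1)$ in total.

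Second, $N_k(x,y)$ is sandwiched between the counts of the lattice in the shrunken interval $[a_k'+\delta,b_k'-\delta)$ and in the enlarged interval $[a_k'-\delta,b_k'+\delta)$. These sandwich intervals are not indexed by grid pairs, because the shift $\pm\delta$ is applied coordinatewise per direction. So I would apply concentration not to pairs but to the \emph{family of interval collections}. For each grid pair $(x',y')$ and each sign choice $\sigma\in\{+,-\}$, consider
\[
Z^\sigma(x',y')=\sum_k\Bigl(\#\bigl\{q:\eps(q+U_k)\in I_k^\sigma\bigr\}-|I_k^\sigma|/\eps\Bigr),
\]
where $I_k^{+}$ is the enlarged interval and $I_k^{-}$ the shrunken one. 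Each $Z^\sigma$ is again a sum of independent centered variables bounded by $1$. The total number of such sums is $\le 2|G|^2\lesssim R^4$. Then
\[
Z(x,y)\le Z^{+}(x',y')+\sum_k\frac{|I^+_k|-(b_k-a_k)}{\eps}\le Z^+(x',y')+O(1),
\]
and symmetrically $Z(x,y)\ge Z^-(x',y')-O(1)$.

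A union bound over $\lesssim R^4$ events, each with failure probability $2\exp(-s^2/2n)$, with $s=C_E\sqrt{n\log R}$ chosen so that each probability is $\le R^{-15}$, gives total failure probability at most $2R^{-10}$. Here $C_R$ is chosen large enough (depending on $\Omega$ through $D$) to absorb the grid-count constant. The additive $\log R$ term in the statement covers the regime where $n$ is small relative to $\log R$, together with the $O(1)$ discretization error. A Bernstein-type bound would also produce exactly $\sqrt{n\log R}+\log R$ if one prefers.

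The main obstacle is this uniformity step. One must make sure that the sandwiching is simultaneous in all $k$ with a single grid pair, and that each sandwich sum is still a sum of independent centered bounded terms. Both points hold because, for each fixed $k$, the interval endpoints are deterministic once $(x',y')$ and $\sigma$ are fixed. I would also handle the orientation swap (which of $x\cdot\nu_k$, $y\cdot\nu_k$ is the minimum) by noting that the interval $[a_k,b_k)$ depends continuously on $(x,y)$ as an unordered pair of projections.
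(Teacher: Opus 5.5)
Your proof is correct, but the uniformity step takes a genuinely different route from the paper's.

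\textbf{The paper's route.} It uses a coarser net of mesh $\delta=\eps T/(100n)$ and controls $|N_k(x,y)-N_k(p,q)|$ probabilistically. A net point $p$ is called unstable in direction $k$ if a shifted line of family $k$ passes within projected distance $\delta$ of it. A Chernoff bound, taken uniformly over the net, controls the number $B(p)$ of unstable directions. The paper then uses
\[
|N_k(x,y)-N_k(p,q)|\le \mathbf 1_{\{p\text{ unstable in }k\}}+\mathbf 1_{\{q\text{ unstable in }k\}}.
\]

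\textbf{Your route.} You use a finer grid of mesh $\asymp\eps/n$. It still has only polynomially many points in $R$, so the union bound costs the same $\log R$ in the exponent. You then apply the deterministic monotone bracketing $N(I_k^-)\le N_k(x,y)\le N(I_k^+)$, which moves the whole discretization error into the deterministic mean terms, where it is $O(n\delta/\eps)=O(1)$.

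\textbf{What each buys.} Your approach needs only one concentration inequality and no second high-probability event. It also gives the stronger bound $O(\sqrt{n\log R})$. In the paper, the additive $\log R$ comes precisely from the Chernoff step: the union bound $R^2e^{-c_{\mathrm{ch}}T}$ forces $T\gtrsim\log R$. Even with your finer mesh, the instability count would have mean $O(1)$ per point and still be only $O(\log R)$ uniformly. In your argument the term is not needed at all. Taking $s=C\sqrt{n\log R}$ makes each Hoeffding tail at most $2R^{-C^2/2}$ however $n$ compares with $\log R$, and the $O(1)$ error is absorbed because $\sqrt{n\log R}\ge 1$. So your remark that the $\log R$ term ``covers the regime where $n$ is small'' is unnecessary rather than wrong.

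\textbf{Details to write out.} First, set $I_k^-=\emptyset$ when $b_k'-a_k'<2\delta$; the per-direction loss $\bigl((b_k-a_k)-|I_k^-|\bigr)/\eps$ is still at most $4\delta/\eps$. Second, what the sandwich actually uses for the orientation swap is not mere continuity but the fact that $\min$ and $\max$ are $1$-Lipschitz. This gives $|a_k-a_k'|\le\delta$ and $|b_k-b_k'|\le\delta$, hence $[a_k'+\delta,b_k'-\delta)\subseteq[a_k,b_k)\subseteq[a_k'-\delta,b_k'+\delta)$.
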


\begin{proof}
We write
\[
        Z(x,y)=
        \sum_{k=0}^{n-1}
        \left(
        N_k(x,y)
        -
        \frac{b_k(x,y)-a_k(x,y)}{\eps}
        \right).
\]
Let
\[
        T=K_E\left(\sqrt{n\log R}+\log R\right),
\]
where $K_E=K_E(\Omega)$ will be chosen large enough below. At the end of the proof we set $C_E=2K_E$.

\bigskip
\noindent
If $T\ge n$, then the result is trivial, since each summand is bounded in absolute value by $1$, so $|Z(x,y)|\le n\le T$. Hence assume $T<n$. In this case
\[
        \delta:=\frac{\eps T}{100n}<\frac{\eps}{100}.
\]
We complete the proof in four steps.

\subsection{Fixed Pairs}

Fix $x,y\in\Omega$ and $k$. Put
\[
        a=a_k(x,y),
        \quad
        b=b_k(x,y),
        \quad
        d=b-a.
\]
Then
\[
        N_k(x,y)
        =
        \#\{q\in\Z:\eps(q+U_k)\in[a,b)\}.
\]
For every shift $U_k$, the number of shifted lattice points in an interval of length $d$ is either
\[
        \left\lfloor\frac d\eps\right\rfloor
        \quad\text{or}\quad
        \left\lfloor\frac d\eps\right\rfloor+1.
\]
Hence
\[
        \left|
        N_k(x,y)-\frac d\eps
        \right|\le1.
\]
Moreover,
\[
\begin{aligned}
        \E_{U_k}\left[N_k(x,y)\right]
        &=
        \int_0^1
        \sum_{q\in\Z}
        \mathbf 1_{\eps(q+u)\in[a,b)}
        \,du  \\
        &=
        \frac1\eps\int_a^b ds
        =
        \frac d\eps.
\end{aligned}
\]
Therefore the summands in $Z(x,y)$ are independent, centered, and bounded by $1$. Hoeffding's inequality \cite{Hoeffding1963} gives
\[
        \Prob\{|Z(x,y)|>s\}
        \le
        2\exp\left(-\frac{2s^2}{n}\right).
\]

\subsection{Deterministic Endpoint Net}

Choose a deterministic $\delta$-net $\mathcal P\subset\Omega$. Since $\Omega$ is bounded, there is a constant
\[
        C_{\mathrm{net}}=C_{\mathrm{net}}(\Omega)<\infty
\]
such that, for every $0<\delta\le1$, one can choose $\mathcal P$ with
\[
        |\mathcal P|\le C_{\mathrm{net}}\delta^{-2}.
\]
Since
\[
        \delta=\frac{\eps T}{100n},
\]
we have
\[
        |\mathcal P|
        \le
        C_{\mathrm{net}}
        \left(\frac{100n}{\eps T}\right)^2
        =
        10^4 C_{\mathrm{net}}
        \left(\frac{n}{\eps T}\right)^2.
\]
Because $T\ge K_E\log R\ge1$, this implies
\[
        |\mathcal P|
        \le
        10^4 C_{\mathrm{net}}
        \left(\frac{n}{\eps}\right)^2.
\]
We now choose the structural constant $C_R=C_R(\Omega)$ in the definition of $R$ large enough that
\[
        C_R^2\ge 10^4 C_{\mathrm{net}}.
\]
Then
\[
        |\mathcal P|\le R^2.
\]

\bigskip
\noindent
For each pair $(p,q)\in\mathcal P^2$, Hoeffding's inequality gives
\[
        \Prob\{|Z(p,q)|>T\}
        \le
        2\exp\left(-\frac{2T^2}{n}\right).
\]
Since $|\mathcal P|^2\le R^4$, a union bound gives
\[
        \Prob\left\{
        \exists p,q\in\mathcal P:\ |Z(p,q)|>T
        \right\}
        \le
        2R^4\exp\left(-\frac{2T^2}{n}\right).
\]
Since $T^2/n\ge K_E^2\log R$, choosing $K_E$ sufficiently large gives
\[
        2R^4\exp\left(-\frac{2T^2}{n}\right)
        \le
        R^{-10}.
\]
Thus, with probability at least $1-R^{-10}$,
\[
        |Z(p,q)|\le T
        \quad
        \text{for all }p,q\in\mathcal P.
\]

\subsection{Unstable Net Points}

For $p\in\mathcal P$ and $0\le k<n$, call $p$ unstable in direction $k$ if
\[
        \dist_{\R/\Z}\left(
        \frac{p\cdot\nu_k}{\eps}-U_k,\ 
        \Z
        \right)
        \le
        \frac{\delta}{\eps}.
\]
Equivalently, $p$ lies within projected distance $\delta$ of one of the shifted lines in the $k$-th family. Since $\delta/\eps<1/100$, for fixed $p,k$,
\[
        \Prob(p\text{ is unstable in direction }k)
        \le
        \frac{2\delta}{\eps}
        =
        \frac{T}{50n}.
\]
Let
\[
        B(p)=
        \#\{0\le k<n:\ p\text{ is unstable in direction }k\}.
\]
Then $B(p)$ is stochastically dominated by a binomial random variable with mean at most $T/50$. By the Chernoff bound \cite{Chernoff1952},
\[
        \Prob\{B(p)>T/5\}
        \le
        \exp(-c_{\mathrm{ch}}T),
\]
where $c_{\mathrm{ch}}>0$ is an absolute constant. Since $|\mathcal P|\le R^2$ and $T\ge K_E\log R$, another union bound gives
\[
        \Prob\left\{
        \exists p\in\mathcal P:\ B(p)>T/5
        \right\}
        \le
        R^2\exp(-c_{\mathrm{ch}}T)
        \le
        R^{-10},
\]
provided $K_E$ is large enough.

\bigskip
\noindent
Thus, with probability at least $1-R^{-10}$,
\[
        B(p)\le T/5
        \quad
        \text{for all }p\in\mathcal P.
\]

\subsection{Extending the Net to All Endpoints}

Assume the two high-probability events from the previous two subsections both occur.

\bigskip
\noindent
Let $x,y\in\Omega$ be arbitrary. Choose $p,q\in\mathcal P$ such that
\[
        |x-p|\le\delta,
        \quad
        |y-q|\le\delta.
\]
We compare $Z(x,y)$ and $Z(p,q)$.

\bigskip
\noindent
For a fixed direction $k$, the count $N_k(x,y)$ can differ from $N_k(p,q)$ only if, while moving $p$ to $x$ or $q$ to $y$, one of the endpoints crosses a shifted lattice line of the $k$-th family. This can occur only if $p$ or $q$ is unstable in direction $k$. Hence
\[
        |N_k(x,y)-N_k(p,q)|
        \le
        \mathbf 1_{\{p\text{ unstable in }k\}}
        +
        \mathbf 1_{\{q\text{ unstable in }k\}}.
\]
Summing in $k$,
\[
        \sum_{k=0}^{n-1}
        |N_k(x,y)-N_k(p,q)|
        \le
        B(p)+B(q)
        \le
        \frac{2T}{5}.
\]
The deterministic mean term changes by at most
\[
\begin{aligned}
        &
        \sum_{k=0}^{n-1}
        \frac{
        \left|
        |(x-y)\cdot\nu_k|
        -
        |(p-q)\cdot\nu_k|
        \right|
        }{\eps}  \\
        &\qquad\le
        \sum_{k=0}^{n-1}
        \frac{
        |(x-p)\cdot\nu_k|+|(y-q)\cdot\nu_k|
        }{\eps}  \\
        &\qquad\le
        \frac{2n\delta}{\eps}
        =
        \frac{T}{50}.
\end{aligned}
\]
Therefore
\[
        |Z(x,y)-Z(p,q)|
        \le
        \frac{2T}{5}+\frac{T}{50}
        \le
        T.
\]
Since $|Z(p,q)|\le T$, we obtain
\[
        |Z(x,y)|\le 2T.
\]
Taking the supremum over $x,y\in\Omega$ and recalling that $C_E=2K_E$ proves the lemma.
\end{proof}

\section{Length Concentration}

We also need to know that the random construction has length close to its expectation.

\begin{lemma}[Length concentration]\label{lem:length}
Let
\[
        L_U=\cH^1(S_{n,\eps,U}).
\]
There exists $C_L=C_L(\Omega)<\infty$ such that
\[
        \E \left[L_U\right]=\frac{n|\Omega|}{\eps},
\]
and, with $R=C_R n/\eps$ as in Lemma \ref{lem:endpoint},
\[
        \Prob\left\{
        \left|
        L_U-\frac{n|\Omega|}{\eps}
        \right|
        >
        C_L\sqrt{n\log R}
        \right\}
        \le
        R^{-10}.
\]
\end{lemma}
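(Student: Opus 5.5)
The proof decomposes the length by direction. By the additivity noted in Section 2,
\[
        L_U=\sum_{k=0}^{n-1}Y_k,
        \qquad
        Y_k=\cH^1(\Omega\cap\mathcal L_k),
\]
and $Y_k$ depends only on $U_k$, so the $Y_k$ are independent. For the expectation, fix $k$ and write $w_k(s)=\cH^1(\Omega\cap\{x\cdot\nu_k=s\})$ for the chord-length profile in direction $\nu_k$. Then
\[
        Y_k=\sum_{q\in\Z}w_k(\eps(q+U_k)),
\]
and integrating over $U_k\in[0,1)$ unfolds the sum, exactly as in the computation of $\E_{U_k}[N_k]$ in Lemma \ref{lem:endpoint}:
\[
        \E\left[Y_k\right]=\frac1\eps\int_\R w_k(s)\,ds=\frac{|\Omega|}{\eps}
\]
by Fubini. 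Summing over $k$ gives $\E[L_U]=n|\Omega|/\eps$.

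For concentration I will apply Hoeffding's inequality to the centered variables $Y_k-\E Y_k$. This requires a deterministic bound $|Y_k-|\Omega|/\eps|\le M_\Omega$ that is uniform in $U_k$ and does not grow with $1/\eps$. This is the main step. For convex $\Omega$ the function $w_k$ is concave on its support interval $[\alpha,\beta]$ and vanishes outside it, with $\beta-\alpha\le D$ and $0\le w_k\le D$. In particular $w_k$ is unimodal, so its total variation is at most $2D$. The standard comparison of a Riemann sum with an integral for a function of bounded variation then gives
\[
        \left|\eps\sum_{q}w_k(\eps(q+u))-\int w_k\right|\le \eps\cdot\mathrm{TV}(w_k)\le 2D\eps
\]
for every $u$. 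Dividing by $\eps$, we get $|Y_k-|\Omega|/\eps|\le 2D$ for every shift. If the paper's hypotheses were weakened to a general compact $\Omega$, this is where convexity (or at least bounded variation of the sections) would be needed. Since the main theorem assumes convexity, I will rely on it.

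Given this bound, Hoeffding's inequality yields
\[
        \Prob\left\{\left|L_U-\frac{n|\Omega|}{\eps}\right|>s\right\}\le 2\exp\left(-\frac{s^2}{8D^2n}\right).
\]
Taking $s=C_L\sqrt{n\log R}$ with $C_L^2\ge 8D^2\cdot 11$, and using $R\ge C_R\ge e$ so that $\log R\ge 1$ and $2R^{-11}\le R^{-10}$, gives a failure probability of at most $R^{-10}$. The only delicate point is confirming that the Riemann-sum error constant depends on $\Omega$ alone. Concavity gives this immediately, since the profile rises and then falls, and each monotone piece contributes at most $\eps\cdot D$ of error.
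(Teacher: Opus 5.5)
Your proposal is correct and follows the paper's proof essentially step for step. You unfold the sum over $U_k$ to get the expectation, use concavity of the slice-length profile to bound its total variation by $2D$ and hence get $|Y_k-|\Omega|/\eps|\le 2D$ via the Riemann-sum comparison, and apply Hoeffding with range $4D$; your explicit choice $C_L^2\ge 88D^2$ together with $R\ge C_R\ge e$ (so $2R^{-11}\le R^{-10}$) just makes precise the constant the paper leaves implicit.
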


\begin{proof}
Fix $k$. Define the slice-length function
\[
        g_k(s)
        =
        \cH^1\left(
        \Omega\cap\{x:x\cdot\nu_k=s\}
        \right).
\]
Then
\[
        \cH^1(\Omega\cap\mathcal L_k)
        =
        \sum_{q\in\Z}g_k(\eps(q+U_k)).
\]
Averaging over $U_k$ gives
\[
\begin{aligned}
        \E_{U_k}\left[\cH^1(\Omega\cap\mathcal L_k)\right]
        &=
        \int_0^1
        \sum_{q\in\Z}
        g_k(\eps(q+u))
        \,du  \\
        &=
        \frac1\eps\int_\R g_k(s)\,ds
        =
        \frac{|\Omega|}{\eps}.
\end{aligned}
\]
Summing in $k$ yields
\[
        \E \left[L_U\right]=\frac{n|\Omega|}{\eps}.
\]
Since $\Omega$ is convex, $g_k$ is concave on its support. Hence it has bounded variation, with
\[
        \Var(g_k)\le 2\sup_s g_k(s)\le 2D.
\]
For any compactly supported function $g$ of bounded variation,
\[
        \left|
        \sum_{q\in\Z}g(\eps(q+u))
        -
        \frac1\eps\int_\R g(s)\,ds
        \right|
        \le
        \Var(g).
\]
Indeed, after multiplying by $\eps$, compare each sampled value with the average of $g$ on the corresponding interval of length $\eps$:
\[
\begin{aligned}
        &
        \left|
        \eps\sum_{q\in\Z}g(\eps(q+u))
        -
        \int_\R g(s)\,ds
        \right|  \\
        &\qquad\le
        \sum_{q\in\Z}
        \int_{\eps(q+u)}^{\eps(q+u+1)}
        |g(\eps(q+u))-g(s)|\,ds
        \le
        \eps\,\Var(g).
\end{aligned}
\]
Dividing by $\eps$ gives the claim.

\bigskip
\noindent
Thus each directional length contribution satisfies
\[
        \left|
        \cH^1(\Omega\cap\mathcal L_k)
        -
        \frac{|\Omega|}{\eps}
        \right|
        \le
        2D.
\]
The $n$ contributions are independent. Hoeffding's inequality gives
\[
        \Prob\left\{
        \left|
        L_U-\frac{n|\Omega|}{\eps}
        \right|>s
        \right\}
        \le
        2\exp\left(-\frac{2s^2}{n(4D)^2}\right).
\]
Taking
\[
        s=C_L\sqrt{n\log R}
\]
with $C_L=C_L(\Omega)$ sufficiently large proves the lemma.
\end{proof}

\section{The Discrepancy Bound}

\begin{proposition}\label{prop:random}
Let
\[
        M=\frac{n|\Omega|}{\eps}
\]
be the expected length of $S_{n,\eps,U}$, and let
\[
        R=\frac{C_R n}{\eps}
\]
with $C_R$ as in Lemma \ref{lem:endpoint}. There exists $C_P=C_P(\Omega)<\infty$ such that, with positive probability,
\[
        \operatorname{disc}_\Omega(S_{n,\eps,U})
        \le
        C_P
        \left(
        \frac{M}{n^2}
        +
        \sqrt{n\log R}
        +
        \log R
        \right),
\]
and
\[
        \left|
        \cH^1(S_{n,\eps,U})-M
        \right|
        \le
        C_P\sqrt{n\log R}.
\]
\end{proposition}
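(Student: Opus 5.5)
We intersect the high-probability events of Lemma~\ref{lem:endpoint} and Lemma~\ref{lem:length}. Since $R\ge C_R\ge e$ and $R\ge n/\eps$ is large, the failure probability is at most $2R^{-10}+R^{-10}<1$, so with positive probability both events hold. The length statement is then immediate from Lemma~\ref{lem:length} once $C_P\ge C_L$. It remains to bound the discrepancy on this event.

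Fix a line $\ell$ outside the measure-zero exceptional set such that $\ell\cap\Omega=[x,y]$ is a nondegenerate chord. (Lines that miss $\Omega$, or meet it in a point, contribute zero to both terms.) Since $\ell$ avoids endpoints and intersection points, $\#(\ell\cap S)=\sum_k N_k(x,y)$. Here we use that $\ell$ is not parallel to any $\nu_k^\perp$ a.e., and that the half-open convention only matters on a null set of lines. By the definition of $\xi_k$ and Lemma~\ref{lem:angular},
\[
\#(\ell\cap S)=\frac{h_\ell}{\eps}\sum_k|t\cdot\nu_k|+Z(x,y)=\frac{2n}{\pi\eps}h_\ell+O\!\left(\frac{h_\ell}{\eps n}\right)+Z(x,y).
\]
The endpoint lemma gives $|Z(x,y)|\le C_E(\sqrt{n\log R}+\log R)$ uniformly over chords. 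The quadrature error is at most $D/(\eps n)=\frac{D}{|\Omega|}\cdot\frac{M}{n^2}$.

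Finally we compare with the target $\frac{2L}{\pi|\Omega|}h_\ell$, where $L=\cH^1(S)$. Note that $\frac{2n}{\pi\eps}=\frac{2M}{\pi|\Omega|}$. The difference is therefore $\frac{2|L-M|}{\pi|\Omega|}h_\ell\le\frac{2D}{\pi|\Omega|}C_L\sqrt{n\log R}$, which the length event controls. Summing the three errors and taking the essential supremum over $\ell$ gives the bound with $C_P$ depending on $D$, $|\Omega|$, $C_E$ and $C_L$.

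The only delicate point is the bookkeeping relating the actual length $L$, which appears in the discrepancy definition, to $M$. This is handled by the length concentration, and it is why the proposition must hold both events simultaneously rather than in expectation. Beyond that, I expect no real obstacle.
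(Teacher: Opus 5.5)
Your proof is correct and follows the paper's argument essentially step for step. You write $\#(\ell\cap S)$ as the quadrature main term plus $Z(x,y)$, intersect the events of Lemma~\ref{lem:endpoint} and Lemma~\ref{lem:length}, convert $1/(\eps n)$ into $M/(|\Omega|n^2)$, and pay $\frac{2D}{\pi|\Omega|}|L_U-M|$ to replace $M$ by the actual length in the Crofton term. Your remark that $R\ge C_R\ge e$ already forces $3R^{-10}<1$ is a slightly cleaner justification of positive probability than the paper's ``for $R$ sufficiently large.''
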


\begin{proof}
Let $\ell$ be a generic line, so that $\ell\cap\Omega=[x,y]$ is a segment and all intersections with the constructed grid are transverse. Let
\[
        h_\ell=\cH^1(\ell\cap\Omega),
        \quad
        t=\frac{y-x}{|y-x|}.
\]
Then
\[
\begin{aligned}
        \#\left(\ell \cap {S_{n,\eps,U}}\right)
        &=
        \sum_{k=0}^{n-1}N_k(x,y)  \\
        &=
        \frac{h_\ell}{\eps}
        \sum_{k=0}^{n-1}|t\cdot\nu_k|
        +
        Z(x,y).
\end{aligned}
\]
By Lemma \ref{lem:angular},
\[
        \frac{h_\ell}{\eps}
        \sum_{k=0}^{n-1}|t\cdot\nu_k|
        =
        \frac{2n}{\pi\eps}\cdot h_\ell
        +
        O_\Omega\left(\frac1{\eps n}\right).
\]
Since
\[
        M=\frac{n|\Omega|}{\eps},
\]
we have
\[
        \frac{2n}{\pi\eps}
        =
        \frac{2M}{\pi|\Omega|}.
\]
Also,
\[
        \frac1{\eps n}
        =
        \frac{M}{|\Omega|n^2}
        \lesssim_\Omega \frac{M}{n^2}.
\]
Therefore
\[
          \#\left(\ell \cap {S_{n,\eps,U}}\right)
        =
        \frac{2M}{\pi|\Omega|}\cdot h_\ell
        +
        O_\Omega\left(\frac{M}{n^2}\right)
        +
        Z(x,y).
\]
By Lemma \ref{lem:endpoint}, with probability at least $1-2R^{-10}$,
\[
        \sup_{x,y\in\Omega}|Z(x,y)|
        \le
        C_E
        \left(
        \sqrt{n\log R}+\log R
        \right).
\]
By Lemma \ref{lem:length}, with probability at least $1-R^{-10}$,
\[
        \left|
        \cH^1(S_{n,\eps,U})-M
        \right|
        \le
        C_L\sqrt{n\log R}.
\]
For $R$ sufficiently large, these two events have positive simultaneous probability. On this event, replacing $M$ by the actual length
\[
        L_U=\cH^1(S_{n,\eps,U})
\]
in the Crofton normalization changes the target term by at most
\[
        \frac{2|L_U-M|}{\pi|\Omega|}\cdot
        \cH^1(\ell\cap\Omega)
        \le
        \frac{2DC_L}{\pi|\Omega|}\sqrt{n\log R},
\]
because $\cH^1(\ell\cap\Omega)\le D$. Taking the essential supremum over $\ell$ and absorbing the displayed constants into $C_P$ gives
\[
        \operatorname{disc}_\Omega(S_{n,\eps,U})
        \le
        C_P
        \left(
        \frac{M}{n^2}
        +
        \sqrt{n\log R}
        +
        \log R
        \right).
\]
The length estimate also follows after increasing $C_P$ so that $C_P\ge C_L$.
\end{proof}

\section{Optimization and Exact Length}

We now prove Theorem \ref{thm:main}. Let
\[
        \Phi(L)=L^{1/5}(\log L)^{2/5}.
\]
It suffices to consider large $L$.

\bigskip
\noindent
Choose an expected length
\[
        M=L-K_0 \Phi(L),
\]
where $K_0=K_0(\Omega)$ is a constant to be fixed. For large $L$, we have $M\sim L$ and
\[
        \Phi(M)\sim \Phi(L).
\]
Choose
\[
        n=
        \left\lfloor
        M^{2/5}(\log M)^{-1/5}
        \right\rfloor
\]
and set
\[
        \eps=\frac{n|\Omega|}{M}.
\]
Then
\[
        \frac{n}{\eps}=\frac{M}{|\Omega|},
\]
so, with $C_R$ as in Lemma \ref{lem:endpoint},
\[
        R=\frac{C_R n}{\eps}
        =
        \frac{C_R}{|\Omega|}M
        \asymp_\Omega M.
\]
By Proposition \ref{prop:random}, there is a deterministic choice of phases $U$ such that
\[
        \operatorname{disc}_\Omega(S_{n,\eps,U})
        \le
        C_P
        \left(
        \frac{M}{n^2}
        +
        \sqrt{n\log R}
        +
        \log R
        \right),
\]
and
\[
        \left|
        \cH^1(S_{n,\eps,U})-M
        \right|
        \le
        C_P\sqrt{n\log R}.
\]
Since $R\asymp_\Omega M$, the chosen value of $n$ gives
\[
        \frac{M}{n^2}
        \lesssim_\Omega
        M^{1/5}(\log M)^{2/5},
\]
and
\[
        \sqrt{n\log R}
        \lesssim_\Omega
        M^{1/5}(\log M)^{2/5}.
\]
Also,
\[
        \log R
        =
        o\left(M^{1/5}(\log M)^{2/5}\right).
\]
Consequently, there is a constant $C_{\mathrm{opt}}=C_{\mathrm{opt}}(\Omega)$ such that
\[
        \operatorname{disc}_\Omega(S_{n,\eps,U})
        \le
        C_{\mathrm{opt}}\Phi(L),
\]
and
\[
        \left|
        \cH^1(S_{n,\eps,U})-M
        \right|
        \le
        C_{\mathrm{opt}}\Phi(L).
\]
Choose
\[
        K_0\ge 2C_{\mathrm{opt}}.
\]
Then
\[
        \cH^1(S_{n,\eps,U})\le L
\]
and
\[
        0\le
        L-\cH^1(S_{n,\eps,U})
        \le
        (K_0+C_{\mathrm{opt}})\Phi(L)
        \lesssim_\Omega \Phi(L).
\]
It remains to adjust the length upward to exactly $L$. Let
\[
        \Delta=L-\cH^1(S_{n,\eps,U}).
\]
Choose a closed disk $B\subset\Omega$ of radius $\rho_\Omega>0$. Inside $B$, add a finite collection $E$ of pairwise disjoint line segments with total length exactly $\Delta$, arranged so that no segment of $E$ overlaps any segment of $S_{n,\eps,U}$. This can be done with at most
\[
        C_{\mathrm{add}}(\Omega)(1+\Delta)
\]
segments. For example, choose a direction not parallel to any of the finitely many directions already used, place disjoint short parallel segments inside $B$, and shorten the final segment so the total length is exactly $\Delta$.

\bigskip
\noindent
Now, let
\[
        S=S_{n,\eps,U}\cup E.
\]
Then
\[
        \cH^1(S)=L.
\]
For every generic line $\ell$, the number of new intersections contributed by $E$ is at most the number of added segments, hence at most
\[
        C_{\mathrm{add}}(\Omega)(1+\Delta)
        \lesssim_\Omega
        \Phi(L).
\]
The change in the Crofton target term caused by increasing the length from $\cH^1(S_{n,\eps,U})$ to $L$ is at most
\[
        \frac{2\Delta}{\pi|\Omega|}
        \cH^1(\ell\cap\Omega)
        \le
        \frac{2D}{\pi|\Omega|}\Delta
        \lesssim_\Omega
        \Phi(L).
\]
Therefore
\[
        \operatorname{disc}_\Omega(S)
        \le
        C_\Omega\Phi(L)
        =
        C_\Omega L^{1/5}(\log L)^{2/5},
\]
for a final constant $C_\Omega$ depending only on $\Omega$. This proves Theorem \ref{thm:main}.

\section*{Acknowledgements} The author thanks Stefan Steinerberger for introducing the Buffon discrepancy problem and for suggesting, in \cite{SteinerbergerBuffon}, that offsets may avoid the large intersection phenomenon in the deterministic Steinhaus construction. The author also acknowledges GPT-5.5 for assistance in performing the detailed computations and preparing an initial draft of this preprint. The proof idea and the direction of the argument are due to the author.

\end{document}